\def\Exp{\mathop{\mbox{\textup{Exp}}}\nolimits}
\newcommand{\fibonomial}{\genfrac{\{}{\}}{0pt}{}}
\newcommand{\C}{\mathbb{C}}
\newcommand{\Z}{\mathbb{Z}}
\newcommand{\N}{\mathbb{N}}
\newcommand{\Q}{\mathbb{Q}}
\newcommand{\R}{\mathbb{R}}
\newcommand{\e}{\mathrm{e}}
\newtheorem{theorem}{Theorem}
\newtheorem{lemma}{Lemma}
\newtheorem{definition}{Definition}
\newtheorem{proposition}{Proposition}
\newtheorem{corollary}{Corollary}
\begin{document}

\title{\textbf{Irrationality of the Deformed Euler Numbers $\e_{s,t,u}$}}
\author{Ronald Orozco L\'opez}
\newcommand{\Addresses}{{
  \bigskip
  \footnotesize

  \textit{E-mail address}, R.~Orozco: \texttt{rj.orozco@uniandes.edu.co}
  
}}

\maketitle
\tableofcontents

\begin{abstract}
In this paper, we define the deformed Euler $(s,t)$-numbers $\e_{s,t,u}$ Furthermore, we prove that $\e_{as,a^2t,u^{-1}}$ and $\e_{as,a^2t,u^{-1}}^{-1}$ are irrational numbers when $a,u\in\Q$ and $\vert au\vert>1$, thus providing a countable infinite family of irrational numbers. This is the first step in a program to study the irrationality of $(s,t)$-analog of known numbers.
\end{abstract}
\noindent 2020 {\it Mathematics Subject Classification}:
Primary 11J72. Secondary 11A67; 11J70; 11B39; 33E20.

\noindent \emph{Keywords: } Generalized Fibonacci numbers, Euler $(s,t)$-numbers, deformed $(s,t)$-binomial formula, irrationality.

\section{Introduction}

One of the most important constants in mathematics is the Euler number $e$. It is well known that $e$ is irrational and many proofs of this fact have been given \cite{euler1, hermite, hurwitz, liou, mac, penesi, sondow}. 
In this paper, the following deformed analogue of the number $e$ defined on generalized Fibonacci numbers is given
\begin{equation*}
    \e_{s,t,u}=\sum_{n=0}^{\infty}\frac{u^{\binom{n}{2}}}{\brk[c]{n}_{s,t}!}.
\end{equation*}
When $s=2$ and $t=-1$, we obtain
\begin{equation}
    \e_{u}=\sum_{n=0}^{\infty}\frac{u^{\binom{n}{2}}}{n!}
\end{equation}
which is the deformed natural basis obtained from the deformed exponential function \cite{patade2}
\begin{equation*}
    \Exp(x,u)=\sum_{n=0}^{\infty}u^{\binom{n}{2}}\frac{x^n}{n!}.
\end{equation*}
When $s=1$ and $t=1$, we obtain the Fibonacci natural base \cite{pashaev_1}
\begin{equation}
    \e_{F}=\sum_{n=0}^{\infty}\frac{1}{F_{n}!},
\end{equation}
where $F_{n}$ are the Fibonacci numbers. The numbers $\e_{u}$ and $\e_{F}$ are not yet proven to be irrational. Therefore, the aim of this paper is to prove the irrationality of the more general numbers $\e_{s,t,u}$. To achieve all the above results, the parameters $s,t$ will be required to satisfy the condition $s^2+4t\geq0$. 

The paper is divided as follows. In Section 2 we deal with generalized Fibonacci numbers $\brk[c]{n}_{s,t}$, where $s,t\in\R$-$\{0\}$ and $s^2+4t\geq0$. The reason for stating the above condition is because under it the sequence of numbers $\vert\brk[c]{n}_{s,t}\vert$ is increasing, which is the natural property of the sequence $(0,1,2,3,4,5,\ldots)$. Next, we define the deformed $(s,t)$-exponential functions. In Section 3 we introduce the deformed Euler $(s,t)$-numbers, and an estimate of them is given. In Section 4 it is proven that $\e_{as,a^2t,u^{-1}}$ and $\e_{as,a^2t,u^{-1}}^{-1}$ are irrational numbers when $a,u\in\Q$ and $\vert au\vert>1$. 

\section{Preliminaries}

\subsection{Generalized Fibonacci numbers for $s^2+4t\geq0$}

The generalized Fibonacci numbers on the parameters $s,t$ are defined by
\begin{equation}\label{eqn_def_fibo}
    \brk[c]{n+2}_{s,t}=s\{n+1\}_{s,t}+t\{n\}_{s,t}
\end{equation}
with initial values $\brk[c]{0}_{s,t}=0$ and $\brk[c]{1}_{s,t}=1$, where $s\neq0$ and $t\neq0$. In \cite{sagan} this sequence is called the generalized Lucas sequence. A Lucas sequence $L_{n}$ is defined as $L_{n+2}=L_{n+1}+L_{n}$ for $n\geq2$, with the initial conditions $L_{0}=2$ and $L_{1}=1$. Since the Lucas sequence is not a special case of the sequence in Eq.(\ref{eqn_def_fibo}), we will insist on calling the latter the generalized Fibonacci numbers and we will reserve the name of generalized Lucas numbers for that sequence that generalizes the sequence $L_{n}$.

The $(s,t)$-Fibonacci constant is the ratio toward which adjacent $(s,t)$-Fibonacci numbers tend. This is the only positive root of $x^{2}-sx-t=0$. We will let $\varphi_{s,t}$ denote this constant, where
\begin{equation*}
    \varphi_{s,t}=\frac{s+\sqrt{s^{2}+4t}}{2}
\end{equation*}
and its conjugate is
\begin{equation*}
    \varphi_{s,t}^{\prime}=s-\varphi_{s,t}=-\frac{t}{\varphi_{s,t}}=\frac{s-\sqrt{s^{2}+4t}}{2}.
\end{equation*}
Three cases arise from the discriminant $\Delta=s^2+4t$: $\Delta>0$, which produces an increasing sequence of the absolute values of its terms. $\Delta = 0$, which yields sequences of the form $n(\pm i\sqrt{t})^{n-1}$. Finally, when $\Delta<0$, then $s\neq0$ and $t<0$ and and we obtain sequences of the form
\begin{equation*}
    \brk[c]{n}_{\sqrt{a},-b/4}=\frac{2}{\sqrt{b-a}}\left(\frac{b}{4}\right)^n\sin(\theta n)
\end{equation*}
where $\theta=\arctan\left(\sqrt{\frac{b}{a}-1}\right)$ and $a=s^2$, $b=-4t$.

Below are some important specializations of generalized Fibonacci numbers when $\Delta>0$.
\begin{enumerate}
    \item When $s=1,t=1$, then $\brk[c]{n}_{1,1}=F_n$, the Fibonacci numbers.
    \item When $s=2,t=1$, then $\brk[c]{n}_{2,1}=P_n$, where $P_n$ are the Pell numbers
    \begin{equation*}
        P_n=(0,1,2,5,12,29,70,169,408\ldots).  
    \end{equation*}
    \item When $s=1,t=2$, then $\brk[c]{n}_{1,2}=J_n$, where $J_n$ are the Jacosbthal numbers
    \begin{equation*}
        J_n=(0,1,1,3,5,11,21,43,85,171,\ldots).
    \end{equation*}
    \item When $s=3,t=-2$, then $\brk[c]{n}_{3,-2}=M_n$, where $M_n=2^n-1$ are the Mersenne numbers
    \begin{equation*}
        M_n=(0,1,3,7,15,31,63,127,255,\ldots).
    \end{equation*}
\end{enumerate}
The Binet's $(s,t)$-identity is
\begin{equation*}
    \brk[c]{n}_{s,t}=
    \frac{\varphi_{s,t}^{n}-\varphi_{s,t}^{\prime n}}{\varphi_{s,t}-\varphi_{s,t}^{\prime}}
\end{equation*}
As $\varphi_{us,u^2t}=u\varphi_{s,t}$ and $\varphi_{us,u^2t}^{\prime}=u\varphi_{s,t}^{\prime}$, then follows that $\brk[c]{n}_{us,u^2t}=u^{n-1}\brk[c]{n}_{s,t}$. Then for a non-zero complex number $u$ we will say that $\brk[c]{n}_{us,u^t}$ is an $u$-\textit{deformation} of $\brk[c]{n}_{s,t}$. We define the \textit{alternating generalized Fibonacci numbers} as the $-1$-deformation of sequence $\brk[c]{n}_{s,t}$, thus,
\begin{equation*}
    \brk[c]{n}_{-s,t}=(-1)^{n-1}\brk[c]{n}_{s,t}.
\end{equation*}
For example, the alternating Fibonacci numbers are: $0,1,-1,2,-3,5-8,\ldots$ and the alternating Pell numbers are: $0,1,-2,5,-12,29,-70,169,-408,\ldots$. Another important $u$-deformation is a $\varphi_{s,t}$-deformation, so $\brk[c]{n}_{s,t}=\varphi_{s,t}^{n-1}[n]_{q}$ with $q=\varphi_{s,t}^\prime/\varphi_{s,t}$.

On the other hand, when $s^{2}+4t=0$, $t<0$, we obtain the degenerate case of the $(s,t)$-Fibonacci numbers. When $\varphi_{s,t}\rightarrow\varphi_{s,t}^{\prime}$, we obtain
\begin{align*}
\lim_{\varphi_{s,t}\rightarrow\varphi_{s,t}^{\prime}}\frac{\varphi_{s,t}^{n}-\varphi^{\prime n}_{a,b}}{\varphi_{s,t}-\varphi^{\prime}_{a,b}}=n\varphi_{s,t}^{\prime(n-1)}.
\end{align*}
Likewise, when $\varphi_{s,t}^{\prime}\rightarrow\varphi_{s,t}$, then $\brk[c]{n}_{s,t}\rightarrow n\varphi
_{s,t}^{n-1}$. Therefore, this implies that $s\rightarrow\pm2i\sqrt{t}$, $t<0$, and that $\varphi_{s,t}=\varphi_{s,t}^{\prime}=\pm i\sqrt{t}$. In this way we obtain the $(\pm2i\sqrt{t},t)$-\textit{Fibonacci function}
\begin{align}\label{eqn_fibo_dege}
\brk[c]{n}_{\pm2i\sqrt{t},t}=n(\pm i\sqrt{t})^{n-1}
\end{align}
for all $t\in\R$, $t<0$. When $t=-1$, then $\brk[c]{n}_{\pm2i\sqrt{t},t}=\brk[c]{n}_{\mp2,-1}=n(\mp1)^{n-1}$. On the other hand, in the $q$-calculus the degenerate case is obtained when $q\mapsto1$. In this situation, the $q$-numbers $\brk[s]{n}_q$ tend to the integers $n$. Then  $\frac{\varphi_{s,t}^{\prime}}{\varphi_{s,t}}\mapsto1$ implies that $\varphi_{s,t}\mapsto\sqrt{-t}$ and $\varphi_{s,t}^{\prime}\mapsto\sqrt{-t}$. Therefore, if $t=-1$, then
\begin{align*}
    \lim_{\varphi_{s,-1}\mapsto1}\frac{\varphi_{s,-1}^{n}-\varphi_{s,-1}^{\prime n}}{\varphi_{s,-1}-\varphi_{s,-1}^{\prime}}=n.
\end{align*}
On the other hand, the sequence $\{0,1,2,3,4,5,6,\ldots\}$, the basis of all classical calculus, is strictly increasing, and clearly, none of its elements is 0 except itself. This fact is important for defining the factorial of a number $n$. Then, we want to establish the range of values that the parameters $s$ and $t$ can take such that $\brk[c]{n}_{s,t}$ is an increasing sequence and such that $\brk[c]{n}_{s,t}\neq0$ for $n\neq0$. We begin our analysis with the following lemma.

\begin{lemma}\label{lem_abs_nst}
For $s\neq0$ and $t\neq0$ in $\R$,
\begin{equation*}
    \vert\brk[c]{n}_{s,t}\vert=\brk[c]{n}_{\vert s\vert,t}.
\end{equation*}
\end{lemma}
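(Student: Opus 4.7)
The plan is to reduce the claim to the $u$-deformation identity $\brk[c]{n}_{us,u^2 t}=u^{n-1}\brk[c]{n}_{s,t}$ recorded in the paragraph just above the lemma, specialized to $u=-1$. This yields the alternating-sign relation
\[
\brk[c]{n}_{-s,t}=(-1)^{n-1}\brk[c]{n}_{s,t},
\]
which is the one algebraic input needed. Everything else is a case split and a non-negativity check.

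First I would split into cases on the sign of $s$. If $s>0$, then $\vert s\vert=s$ and the claim reduces to $\vert\brk[c]{n}_{s,t}\vert=\brk[c]{n}_{s,t}$, i.e.\ to checking $\brk[c]{n}_{s,t}\ge 0$. If $s<0$, then $\vert s\vert=-s$, so the alternating identity gives
\[
\brk[c]{n}_{\vert s\vert,t}=\brk[c]{n}_{-s,t}=(-1)^{n-1}\brk[c]{n}_{s,t},
\]
and taking absolute values on both sides yields $\vert\brk[c]{n}_{s,t}\vert=\vert\brk[c]{n}_{\vert s\vert,t}\vert$; the claim is then equivalent to exactly the same non-negativity statement, now for the parameter pair $(\vert s\vert,t)$.

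Thus the entire problem collapses to the monotonicity-flavored fact: under the standing hypothesis $s^2+4t\ge 0$ (maintained throughout the section), one has $\brk[c]{n}_{\vert s\vert,t}\ge 0$ for all $n\ge 0$. This is the only place where real work is needed, and I expect it to be the main (though modest) obstacle. I would handle it via Binet's identity: since $\vert s\vert>0$, the dominant root $\varphi_{\vert s\vert,t}=\tfrac{\vert s\vert+\sqrt{s^2+4t}}{2}$ is positive, and a direct comparison shows $\varphi_{\vert s\vert,t}\ge \vert\varphi_{\vert s\vert,t}^{\prime}\vert$; consequently the numerator $\varphi_{\vert s\vert,t}^{n}-\varphi_{\vert s\vert,t}^{\prime\, n}$ shares its sign with the denominator, so the quotient is non-negative. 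An induction on $n$ using the recurrence, with subcases on the sign of $t$, gives an equally elementary alternative. Once this non-negativity is in place, both cases of the sign split close the argument immediately.
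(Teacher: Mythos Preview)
Your proposal is correct and follows essentially the same route as the paper: a sign split on $s$, the alternating relation $\brk[c]{n}_{-s,t}=(-1)^{n-1}\brk[c]{n}_{s,t}$ for the $s<0$ case, and a Binet-based non-negativity check for $\brk[c]{n}_{\vert s\vert,t}$. The only cosmetic difference is that the paper re-derives the alternating identity from Binet in situ, whereas you (more efficiently) quote the $u$-deformation formula already recorded above the lemma.
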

\begin{proof}
Follow easily if $s>0$ and $t>0$. If $s>0$ and $t<0$, then $0<\varphi_{s,t}^\prime<\varphi_{s,t}$ and $\vert\brk[c]{n}_{s,t}\vert=\brk[c]{n}_{s,t}$. If $s<0$ and $t\neq0$, then for $s=-u$, $u>0$
\begin{align*}
    \varphi_{s,t}&=\frac{-u+\sqrt{u^2+4t}}{2}=-\frac{u-\sqrt{u^2+4t}}{2}=-\varphi_{-s,t}^\prime,\\
    \varphi_{s,t}^{\prime}&=\frac{-u-\sqrt{u^2+4t}}{2}=-\frac{u+\sqrt{u^2+4t}}{2}=-\varphi_{-s,t}.
\end{align*}
In this way
\begin{align*}
\brk[c]{n}_{s,t}&=\frac{\varphi_{s,t}^{n}-\varphi_{s,t}^{\prime n}}{\varphi_{s,t}-\varphi_{s,t}^{\prime}}=\frac{(-\varphi_{-s,t}^{\prime})^n-(-\varphi_{-s,t})^n}{-\varphi_{-s,t}^\prime+\varphi_{-s,t}}=(-1)^{n-1}\brk[c]{n}_{-s,t}
\end{align*}
and
\begin{align*}
    \vert\brk[c]{n}_{s,t}\vert=\vert(-1)^{n+1}\brk[c]{n}_{-s,t}\vert=\vert\brk[c]{n}_{-s,t}\vert=\brk[c]{n}_{-s,t}=\brk[c]{n}_{\vert s\vert,t}
\end{align*}
and thus we obtain the first result. 
\end{proof}

The following Lemma exhibits conditions for $s$ and $t$ to achieve the aims of this paper.

\begin{lemma}\label{lemma_prop_nst}
Set $s,t\in\Z$ such that $s\neq0$ and $\Delta>0$ and suppose $\vert q\vert\neq1$. Then
\begin{enumerate}
    \item $\brk[c]{n}_{\vert s\vert,t}$ is strictly increasing.
    \item If $\vert s\vert+t>1$ with $\vert s\vert\geq1$, then $\brk[c]{n+1}_{\vert s\vert,t}>\brk[c]{n}_{\vert s\vert,t}+1$, for all $n\geq2$.
    \item If $\vert s\vert+t>1$ with $\vert s\vert\geq1$, then $n\leq\brk[c]{n}_{\vert s\vert,t}$.
\end{enumerate}
\end{lemma}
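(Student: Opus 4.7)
The plan is to treat all three items by a unified induction on $n$, writing $a_n := \brk[c]{n}_{\vert s\vert,t}$ and working from the recurrence
\[ a_{n+2} = \vert s\vert\, a_{n+1} + t\, a_n,\qquad a_0 = 0,\ a_1 = 1, \]
together with Binet's formula $a_n = (\varphi^n - \varphi'^n)/(\varphi - \varphi')$. As a preliminary, I would record that under the hypotheses the roots $\varphi, \varphi'$ are real and distinct, $\varphi > 0$, and $\vert\varphi'\vert < \varphi$ (using $\Delta > 0$ and $\vert q\vert \neq 1$); moreover $\varphi > 1$, automatically for $\vert s\vert \geq 2$ and forced in the case $\vert s\vert = 1$ by $\Delta > 0$ with $t \in \Z \setminus \{0\}$, which yields $t \geq 1$. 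Binet then gives $a_n > 0$ for every $n \geq 1$.

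For Part 1, I would use the telescoped identity
\[ a_{n+2} - a_{n+1} = (\vert s\vert - 1)\, a_{n+1} + t\, a_n. \]
When $t > 0$ the right-hand side is strictly positive once $a_n > 0$ (handling $\vert s\vert = 1$ via the fact that $t \geq 1$ forces the step). When $t < 0$, the integer constraint $\Delta > 0$ forces $\vert s\vert \geq 3$, and here I would switch to Binet: writing $q = \varphi'/\varphi \in (0,1)$, one has
\[ \frac{a_{n+1}}{a_n} = \varphi\cdot\frac{1 - q^{n+1}}{1 - q^n} > \varphi > 1, \]
which gives strict monotonicity. Base cases are verified from $a_1 = 1$, $a_2 = \vert s\vert$, $a_3 = \vert s\vert^2 + t$.

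For Part 2, I would promote the same induction to the level of $a_{n+1} > a_n + 1$. The identity
\[ a_{n+2} - a_{n+1} - 1 = (\vert s\vert - 1)\, a_{n+1} + t\, a_n - 1, \]
combined with the sharper hypothesis $\vert s\vert + t > 1$ and the bound from Part 1, closes the inductive step. The base case $n = 2$ reduces to $\vert s\vert^2 + t > \vert s\vert + 1$, i.e.\ $\vert s\vert(\vert s\vert - 1) > 1 - t$, which is direct from $\vert s\vert \geq 1$ and $\vert s\vert + t > 1$. Part 3, $n \leq a_n$, then follows by telescoping Part 2 from $n = 2$: one obtains $a_n \geq a_2 + (n - 2) = \vert s\vert + n - 2 \geq n$ whenever $\vert s\vert \geq 2$, and the residual $\vert s\vert = 1$ case (in which $t \geq 1$) is handled by a direct induction using $a_{n+2} = a_{n+1} + t\, a_n$.

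The main obstacle will be Part 1 in the $t < 0$ regime, where the recurrence mixes terms of opposite sign and a naive induction using only positivity cannot close; one genuinely needs either a strengthened (ratio-level) inductive hypothesis or the Binet detour sketched above. A subsidiary bookkeeping issue is the handling of the $\vert s\vert = 1$ boundary throughout: there the base cases are tight ($a_1 = a_2 = 1$) and must be verified by hand before the inductive step engages.
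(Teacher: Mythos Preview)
Your overall plan is sound and in several places cleaner than the paper's argument. For Part~1 with $t<0$ the paper proves by induction the auxiliary bound $\brk[c]{n}_{\vert s\vert,t}>n(-t)^{(n-1)/2}$, whereas you bypass this via the Binet ratio $\dfrac{a_{n+1}}{a_n}=\varphi\cdot\dfrac{1-q^{n+1}}{1-q^n}>\varphi>1$; this is shorter and equally valid. For Part~2 the paper runs a two-step induction on the inequality $a_{n+1}>a_n+1$ itself, while you deduce each step directly from Part~1 via $(\vert s\vert-1)a_{n+1}+ta_n>1$; your route is again more economical and also covers the $t<0$ regime, which the paper's Part~2 proof never treats explicitly.

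There is, however, a genuine gap at the $\vert s\vert=1$ boundary. Your base-case claim for Part~2, namely that $\vert s\vert(\vert s\vert-1)>1-t$ is ``direct from $\vert s\vert\geq1$ and $\vert s\vert+t>1$,'' fails: for $\vert s\vert=t=1$ (the Fibonacci case) both sides equal~$0$. In fact the lemma as stated is literally false there for small $n$: $F_3=2=F_2+1$ and $F_4=3=F_3+1$, so Part~2 only begins to hold at $n=4$, and likewise $n\leq F_n$ fails for $n=2,3,4$, so Part~3 only holds from $n\geq5$. The paper tacitly acknowledges this and, for $\vert s\vert=1$, starts the induction for Part~2 at $n=4$ (checking $\brk[c]{5}_{1,t}>\brk[c]{4}_{1,t}+1$) and for Part~3 at $n=6$. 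Your sketch flags $\vert s\vert=1$ as delicate but then asserts the $n=2$ base case anyway; you need to shift the base there exactly as the paper does (the later applications in Theorems~\ref{theo_e_irra} and~\ref{theo_e_inv_irra} only use Parts~2--3 for $n$ large, so nothing downstream breaks).
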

\begin{proof}
\begin{enumerate}
    \item Take $s,t\in\Z$. As $s\neq0$, $t>0$ and $s^2+4t>0$, then every $\brk[c]{n}_{\vert s\vert,t}\in\N$ for all $n\in\N$. Now, suppose $0<\vert q\vert<1$. Then
    \begin{align*}
        \lim_{n\rightarrow\infty}\brk[c]{n}_{s,t}&=\lim_{n\rightarrow\infty}\frac{\varphi_{\vert s\vert,t}^n-\varphi_{\vert s\vert,t}^{\prime n}}{\varphi_{\vert s\vert,t}-\varphi_{\vert s\vert,t}^\prime}=\lim_{n\rightarrow\infty}\varphi_{\vert s\vert,t}^{n-1}\frac{1-q^n}{1-q}=\infty
    \end{align*}
    and $\brk[c]{n}_{\vert s\vert,t}$ is increasing. From Eq.(\ref{eqn_def_fibo}), $\brk[c]{n+2}_{s,t}>\brk[c]{n+1}_{s,t}$ and $\brk[c]{n+2}_{s,t}>\brk[c]{n}_{s,t}$. Then $\brk[c]{n}_{s,t}$ is strictly increasing. It is also proven to $\vert q\vert>1$. On the other side, suppose that $t<0$. Therefore must be $s^2-4t>0$ and thus $\vert s\vert>2\sqrt{t}$. We will prove using induction on $n$ that $\brk[c]{n}_{\vert s\vert,-t}\in\N$. It is easy to notice that $\brk[c]{2}_{\vert s\vert,-t}>2t^{1/2}$, $\brk[c]{3}_{\vert s\vert,-t}>3t^{2/2}$, $\brk[c]{4}_{s,-t}>4t^{3/2}$. We can therefore assume that $\brk[c]{n}_{\vert s\vert,-t}>nt^{(n-1)/2}$ and $\brk[c]{n+1}_{\vert s\vert,-t}>(n+1)t^{n/2}$. Thus
    \begin{align*}
        \brk[c]{n+2}_{\vert s\vert,-t}&=\vert s\vert\brk[c]{n+1}_{\vert s\vert,-t}-t\brk[c]{n}_{\vert s\vert,-t}\\
        &>2t^{1/2}(n+1)t^{n/2}-tnt^{(n-1)/2}=(n+2)t^{(n+1)/2}
    \end{align*}
    and $\brk[c]{n}_{\vert s\vert,-t}\in\N$ for all $n\in\N$. And it immediately follows that $\brk[c]{n}_{\vert s\vert,-t}<\brk[c]{n+1}_{\vert s\vert,-t}$ for all $n\in\N$. 
    
    \item Take $\vert s\vert\geq2$ and $t>0$ such that $\vert s\vert+t>1$. We will show by induction that $\brk[c]{n+1}_{\vert s\vert,t}>\brk[c]{n}_{\vert s\vert,t}+1$ for all $n\geq2$. When $n=2$, $\brk[c]{2}_{\vert s\vert,t}=\vert s\vert\geq2$ and as it is assumed that $\vert s\vert+t\geq1$, then $\brk[c]{3}_{\vert s\vert,t}=s^2+t>(1-t)^2+t=1-t+t^2>\brk[c]{2}_{\vert s\vert,t}+1$. Suppose it is true that $\brk[c]{n}_{\vert s\vert,t}>\brk[c]{n-1}_{\vert s\vert,t}+1$ and $\brk[c]{n+1}_{\vert s\vert,t}>\brk[c]{n}_{\vert s\vert,t}+1$. Then     
    \begin{align*}
        \brk[c]{n+2}_{\vert s\vert,t}&=\vert s\vert\brk[c]{n+1}_{\vert s\vert,t}+t\brk[c]{n}_{\vert s\vert,t}\\
        &>\vert s\vert(\brk[c]{n}_{\vert s\vert,t}+1)+t(\brk[c]{n-1}_{\vert s\vert,t}+1)\\
        &=\vert s\vert\brk[c]{n}_{\vert s\vert,t}+t\brk[c]{n-1}_{\vert s\vert,t}+\vert s\vert+t\\
        &\geq\brk[c]{n+1}_{\vert s\vert,t}+1
    \end{align*}
    and the statement is true for all $n\geq2$. Now, take $\vert s\vert=1$ and $t\geq1$. Then $\brk[c]{5}_{1,t}=1+3t+t^2>(1+2t)+1=\brk[c]{4}_{1,t}+1$ and by induction on $n$ we obtain $\brk[c]{n+1}_{1,t}>\brk[c]{n}_{1,t}+1$.
    
    \item If $t\geq1$, then $\brk[c]{n}_{\vert s\vert,-t}>nt^{(n-1)/2}>n$, for all $n\in\N$ and $\vert s\vert>2\sqrt{t}$. Now suppose that $\vert s\vert+t\geq1$ with $\vert s\vert>2$. Then $\brk[c]{2}_{\vert s\vert,t}>2$ and $\brk[c]{3}_{\vert s\vert,t}>3$. Assume by induction hypothesis that $\brk[c]{n}_{\vert s\vert,t}>n$. Then, according to statement 2,
\begin{equation*}
    \brk[c]{n+1}_{\vert s\vert,t}>\brk[c]{n}_{\vert s\vert,t}+1>n+1.
\end{equation*}
Then the statement is true for all $n\geq2$. Now take $\vert s\vert=1$ and $t\geq1$. Then $\brk[c]{6}_{1,t}=1+4t+3t^2>6$. Then, using statement 2 and proving by induction we arrive at the truth that $\brk[c]{n}_{1,t}>n$ for all $n\geq6$.

\end{enumerate}
\end{proof}
Take $t<0$ in Eq.(\ref{eqn_fibo_dege}). Then $\vert\brk[c]{n}_{\pm2\sqrt{a},-a}\vert$, with $a=-t$, is strictly increasing. Suppose $\Delta<0$. If $\arctan\sqrt{\frac{b}{a}-1}\neq\frac{k\pi}{n}$, for all $n\geq1$, then $\brk[c]{n}_{\sqrt{a},-b/4}\neq0$, for $n\neq0$. However, it is not an increasing sequence. For example,
\begin{equation*}
    \brk[c]{n}_{1,-2}=(0,1,1,-1,-3,-1,5,7,-3,\ldots).
\end{equation*}
For $s,t\in\Z$ such that $\Delta>0$, the $(s,t)$-Fibonomial coefficients are define by
\begin{equation*}
    \fibonomial{n}{k}_{s,t}=\frac{\brk[c]{n}_{s,t}!}{\brk[c]{k}_{s,t}!\brk[c]{n-k}_{s,t}!},
\end{equation*}
where $\brk[c]{n}_{s,t}!=\brk[c]{1}_{s,t}\brk[c]{2}_{s,t}\cdots\brk[c]{n}_{s,t}$ is the $(s,t)$-factorial or generalized fibotorial.
For $\Delta=0$, we define the $(\pm2i\sqrt{t},t)$-factorial and the $(\pm2i\sqrt{t},t)$-Fibonomial functions as 
\begin{align*}
    \brk[c]{n}_{\pm2i\sqrt{t},t}!&=(\pm i\sqrt{t})^{\binom{n}{2}}n!
\end{align*}
and
\begin{align*}
    \fibonomial{n}{k}_{\pm2i\sqrt{t},t}&=(\pm i\sqrt{t})^{k(n-k)}\binom{n}{k},
\end{align*}
respectively. For $\Delta<0$ with $\theta=\arctan\sqrt{\frac{b}{a}-1}\neq\frac{k\pi}{n}$, $k\in\Z$, $n\geq1$, the $(s,t)$-factorial is
\begin{equation*}
    \brk[c]{n}_{\sqrt{a},-b/4}!=\left(\frac{2}{\sqrt{b-a}}\right)^n\left(\frac{b}{4}\right)^{\binom{n+1}{2}}\prod_{k=1}^{n}\sin(\theta k)
\end{equation*}
and the $(s,t)$-Fibonomials coefficients are
\begin{equation*}
    \left(\frac{b}{4}\right)^{k(n-k)}\frac{\prod_{h=k+1}^{n}\sin(\theta h)}{\prod_{h=1}^{n-k}\sin(\theta h)}.
\end{equation*}

\subsection{Deformed $(s,t)$-exponential function}

Set $s,t\in\R-\{0\}$. We will set $q=\varphi_{s,t}^{\prime}/\varphi_{s,t}$ in the remainder of the paper. This section defines the deformed $(s,t)$-exponential function.
\begin{definition}
Set $s\neq0$. For all $u\in\C$, we define the deformed $(s,t)$-exponential function as
\begin{equation*}
    \exp_{s,t}(z,u)=
    \begin{cases}
        \sum_{n=0}^{\infty}u^{\binom{n}{2}}\frac{z^{n}}{\brk[c]{n}_{s,t}!}&\text{ if }u\neq0;\\
        1+z&\text{ if }u=0.
    \end{cases}
\end{equation*}
Also, we define 
\begin{align*}
\exp_{s,t}(z)&=\exp_{s,t}(z,1),\\
\Exp_{s,t}(z)&=\exp_{s,t}(z,\varphi_{s,t}),\\    
\Exp^{\prime}_{s,t}(z)&=\exp_{s,t}(z,\varphi^{\prime}_{s,t}).
\end{align*}
\end{definition}

It is straightforward to prove the following theorem.
\begin{theorem}\label{theo_exp_conv}
Set $s\neq0,t\neq0$. The function $\exp_{s,t}(z,u)$ is
\begin{enumerate}
    \item[1.] an entire function if either $(q,u)\in E_{1}$ or $(q,u)\in E_{2}$, where
    \begin{align*}
        E_{1}&=\{(q,u):0<\vert q\vert<1, 0<u<\vert\varphi_{s,t}\vert\},\\
        E_{2}&=\{(q,u):\vert q\vert>1, 0<u<\vert\varphi_{s,t}^\prime\vert\},
    \end{align*}
    \item[2.] convergent in the disks 
    \begin{align*}
        \mathbb{D}_{1}&=\{z\in\C:\vert z\vert<\vert\varphi_{s,t}\vert/\sqrt{s^{2}+4t}\},\text{ when } u=\vert\varphi_{s,t}\vert\text{ and }0<\vert q\vert<1,\\
        \mathbb{D}_{2}&=\{z\in\C:\vert z\vert<\vert\varphi_{s,t}^\prime\vert/\sqrt{s^{2}+4t}\},\text{ when }u=\vert\varphi_{s,t}^\prime\vert\text{ and }\vert q\vert>1,
    \end{align*}
         \item[3.] convergent in $z=0$ when either $u>\vert\varphi_{s,t}\vert$ or $u>\vert\varphi_{s,t}^\prime\vert$.
\end{enumerate}
\end{theorem}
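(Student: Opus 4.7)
The plan is to apply the ratio test to the general term $t_n = u^{\binom{n}{2}} z^n / \brk[c]{n}_{s,t}!$. Its consecutive ratio is
\[
\left|\frac{t_{n+1}}{t_n}\right| = \frac{|u|^n\,|z|}{|\brk[c]{n+1}_{s,t}|},
\]
so the entire question reduces to the asymptotic size of $|\brk[c]{n}_{s,t}|$, which I would read off from Binet's identity $\brk[c]{n}_{s,t} = (\varphi_{s,t}^n - \varphi_{s,t}'^n)/(\varphi_{s,t} - \varphi_{s,t}')$ together with $\varphi_{s,t} - \varphi_{s,t}' = \sqrt{s^2+4t}$.

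In the regime $0 < |q| < 1$ the root $\varphi_{s,t}$ dominates, giving $|\brk[c]{n}_{s,t}| \sim |\varphi_{s,t}|^n/\sqrt{s^2+4t}$; in the regime $|q| > 1$ the conjugate dominates, giving $|\brk[c]{n}_{s,t}| \sim |\varphi_{s,t}'|^n/\sqrt{s^2+4t}$. Writing $\varphi_*$ for whichever of $|\varphi_{s,t}|, |\varphi_{s,t}'|$ dominates in the regime at hand, the consecutive ratio then satisfies
\[
\left|\frac{t_{n+1}}{t_n}\right| \sim \frac{|z|\sqrt{s^2+4t}}{\varphi_*}\left(\frac{|u|}{\varphi_*}\right)^n.
\]

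With this uniform formula, the three parts of the theorem follow from a trichotomy on $|u|$ against $\varphi_*$. If $|u| < \varphi_*$, the geometric factor $(|u|/\varphi_*)^n$ drives the ratio to zero for every $z \in \C$, so the series converges everywhere and $\exp_{s,t}(\cdot,u)$ is entire; specializing $\varphi_* = |\varphi_{s,t}|$ over $E_1$ and $\varphi_* = |\varphi_{s,t}'|$ over $E_2$ gives Part~1. If $|u| = \varphi_*$, the geometric factor is identically $1$ in the limit and the ratio tends to $|z|\sqrt{s^2+4t}/\varphi_*$, so convergence holds precisely on $|z| < \varphi_*/\sqrt{s^2+4t}$, recovering the disks $\mathbb{D}_1$ and $\mathbb{D}_2$ of Part~2. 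If $|u| > \varphi_*$, the geometric factor blows up and the ratio diverges to $\infty$ for every $z \neq 0$, leaving $z = 0$ as the sole point of convergence, which is Part~3.

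Since the proof is essentially a ratio-test computation, no single step is particularly hard; the only point demanding care is keeping straight which root dominates on each side of $|q|=1$ and invoking Binet's asymptotic accordingly. I would also note briefly that the $u=0$ clause of the definition is compatible with the series in the formal limit, since $0^{\binom{n}{2}}$ vanishes for $n \geq 2$ and the surviving terms collapse to $1+z$.
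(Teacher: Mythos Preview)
The paper does not actually prove this theorem; it is introduced with the sentence ``It is straightforward to prove the following theorem'' and no argument is given. Your ratio-test computation, using Binet's identity to obtain $|\brk[c]{n}_{s,t}|\sim \varphi_*^{\,n}/\sqrt{s^2+4t}$ with $\varphi_*=|\varphi_{s,t}|$ or $|\varphi_{s,t}'|$ according to whether $|q|<1$ or $|q|>1$, and then reading off the trichotomy $|u|<\varphi_*$, $|u|=\varphi_*$, $|u|>\varphi_*$, is exactly the natural way to supply the omitted details and is correct.
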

Taking $s\rightarrow\pm2i\sqrt{t}$, $t<0$, then the deformed $(s,t)$-exponential functions reduce to the following deformed $(\pm2i\sqrt{t},t)$-exponential functions:
\begin{align*}
\exp_{\pm2i\sqrt{t},t}(z,u)&=\sum_{n=0}^{\infty}(u/\pm i\sqrt{t})^{\binom{n}{2}}\frac{z^{n}}{n!},\\
\exp_{\pm2i\sqrt{t},t}(z)&=\sum_{n=0}^{\infty}(\pm i\sqrt{t})^{-\binom{n}{2}}\frac{z^{n}}{n!},\\
\exp_{\pm2i\sqrt{t},t}^{\prime}(z)&=\sum_{n=0}^{\infty}(\pm i\sqrt{t})^{\binom{n}{2}}\frac{z^{n}}{n!},\\
\Exp_{\pm2i\sqrt{t},t}(z)&=\Exp_{F_{\pm2i\sqrt{t},t}}^{\prime}(z)=e^{z}.
\end{align*}
When $t=-1$, then
\begin{align*}
\exp_{\mp2,-1}(z,u)&=\sum_{n=0}^{\infty}(\mp u)^{\binom{n}{2}}\frac{z^{n}}{n!},\\
\exp_{\mp2,-1}(z)&=\exp^{\prime}_{\mp2,-1}(z)=\sum_{n=0}^{\infty}(\mp)^{\binom{n}{2}}\frac{z^{n}}{n!},\\
\Exp_{2,-1}(z)&=\Exp_{2,-1}^{\prime}(z)=e^{z}.
\end{align*}
Thus
\begin{equation*}
    \exp_{2,-1}(x,u)=\sum_{n=0}^{\infty}u^{\binom{n}{2}}\frac{x^n}{n!}
\end{equation*}
and
\begin{equation*}
    \exp_{2,-1}(x)=\exp^{\prime}_{2,-1}(x)=\Exp_{2,-1}(x)=\Exp^{\prime}_{2,-1}(x)=e^{x}.
\end{equation*}

\begin{theorem}
For all real number $t<0$ the function $\exp_{\pm2i\sqrt{t},t}(x,u)$
\begin{enumerate}
    \item is entire if $\vert u\vert\leq\vert\sqrt{t}\vert$.
    \item Converge in $x=0$ when $\vert u\vert>\vert\sqrt{t}\vert$.
\end{enumerate}
\end{theorem}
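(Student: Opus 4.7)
The plan is to apply the ratio test directly to the defining power series
$$\exp_{\pm2i\sqrt{t},t}(x,u)=\sum_{n=0}^{\infty}\left(\frac{u}{\pm i\sqrt{t}}\right)^{\binom{n}{2}}\frac{x^{n}}{n!}.$$
Let me set $w=u/(\pm i\sqrt{t})$, so that $|w|=|u|/|\sqrt{t}|$. The coefficient of $x^{n}$ is $a_{n}=w^{\binom{n}{2}}/n!$, and since $\binom{n+1}{2}-\binom{n}{2}=n$, a direct computation gives
$$\left|\frac{a_{n+1}}{a_{n}}\right|=\frac{|w|^{n}}{n+1}.$$

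For part 1, I would observe that the hypothesis $|u|\leq|\sqrt{t}|$ is exactly $|w|\leq 1$. Hence $|w|^{n}\leq 1$ uniformly in $n$ and $|a_{n+1}/a_{n}|\leq 1/(n+1)\to 0$. By the ratio test the radius of convergence is $\infty$, so the series represents an entire function of $x$.

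For part 2, the hypothesis $|u|>|\sqrt{t}|$ translates to $|w|>1$, so $|w|^{n}$ grows geometrically while $n+1$ grows only linearly; therefore $|a_{n+1}/a_{n}|\to\infty$. This means the radius of convergence is $0$ and the series converges only at $x=0$.

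There is no genuine obstacle: the proof reduces to the elementary observation that the factorial $n!$ dominates any single-exponential growth of $w^{\binom{n}{2}}$ precisely when $|w|\leq 1$, and fails to do so when $|w|>1$; the borderline case $|w|=1$ is already safely inside the convergent regime because the coefficients $|w|^{\binom{n}{2}}$ have modulus one. No additional machinery from the earlier sections of the paper is needed beyond the definition of $\exp_{\pm2i\sqrt{t},t}(x,u)$ recalled above.
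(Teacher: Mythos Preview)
Your argument is correct: writing $w=u/(\pm i\sqrt{t})$ and computing $|a_{n+1}/a_n|=|w|^n/(n+1)$ gives radius of convergence $\infty$ when $|w|\le 1$ and radius $0$ when $|w|>1$, which is exactly the dichotomy claimed. The paper itself states this theorem without proof (just as it declares the preceding Theorem~\ref{theo_exp_conv} ``straightforward''), so there is nothing to compare against; your ratio-test computation is the natural verification and would serve perfectly well as the omitted proof.

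One small quibble of phrasing: in your closing remark you call $w^{\binom{n}{2}}$ ``single-exponential'', but $\binom{n}{2}$ is quadratic in $n$, so the coefficients actually grow super-exponentially when $|w|>1$. This does not affect the argument, since the ratio test only sees the quotient $|w|^n/(n+1)$, but you may want to adjust the wording.
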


\begin{definition}
Set $s,t\in\R$, $s\neq0$, $t\neq0$. If $s^2+4t\neq0$, define the $(s,t)$-derivative $\mathbf{D}_{s,t}$ of the function $f(x)$ as
\begin{equation}
(\mathbf{D}_{s,t}f)(x)=
\begin{cases}
\frac{f(\varphi_{s,t}x)-f(\varphi_{s,t}^{\prime}x)}{(\varphi_{s,t}-\varphi_{s,t}^{\prime})x},&\text{ if }x\neq0;\\
f^{\prime}(0),&\text{ if }x=0
\end{cases}
\end{equation}
provided $f(x)$ differentiable at $x=0$. If $s^2+4t=0$, $t<0$, define the $(\pm i\sqrt{t},t)$-derivative of the function $f(x)$ as
\begin{equation}
    (\mathbf{D}_{\pm i\sqrt{t},t}f)(x)=f^{\prime}(\pm i\sqrt{t}x).
\end{equation}
If $(\mathbf{D}_{s,t}f)(x)$ exist at $x=a$, then $f(x)$ is $(s,t)$-differentiable at $a$.
\end{definition}

\begin{proposition}
For all non-zero real numbers $s,t$, the deformed $(s,t)$-exponential function satisfies the equation
\begin{equation}\label{eqn_def_exp}
    \mathbf{D}_{s,t}\exp_{s,t}(x,u)=\exp_{s,t}(ux,u),\ \exp_{s,t}(0,u)=1.
\end{equation}
\end{proposition}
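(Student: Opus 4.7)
The plan is to verify the identity termwise on the power series defining $\exp_{s,t}(x,u)$. First I would record the basic fact that the operator $\mathbf{D}_{s,t}$ acts on monomials by
\[
\mathbf{D}_{s,t}\,x^{n}=\frac{\varphi_{s,t}^{n}-\varphi_{s,t}^{\prime n}}{\varphi_{s,t}-\varphi_{s,t}^{\prime}}\,x^{n-1}=\brk[c]{n}_{s,t}\,x^{n-1},
\]
which follows immediately from Binet's identity. Since $\mathbf{D}_{s,t}$ is a bounded linear combination of two dilations, it can be applied term by term inside the radius of convergence guaranteed by Theorem~\ref{theo_exp_conv}, so no extra analytic work is needed beyond this monomial formula.

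Next I would apply this to the series
\[
\exp_{s,t}(x,u)=\sum_{n=0}^{\infty}u^{\binom{n}{2}}\frac{x^{n}}{\brk[c]{n}_{s,t}!}
\]
to obtain
\[
\mathbf{D}_{s,t}\exp_{s,t}(x,u)=\sum_{n=1}^{\infty}u^{\binom{n}{2}}\frac{\brk[c]{n}_{s,t}\,x^{n-1}}{\brk[c]{n}_{s,t}!}=\sum_{n=1}^{\infty}u^{\binom{n}{2}}\frac{x^{n-1}}{\brk[c]{n-1}_{s,t}!}.
\]
Reindexing $n\mapsto n+1$ and using the Pascal-style identity $\binom{n+1}{2}=\binom{n}{2}+n$, the right-hand side equals
\[
\sum_{n=0}^{\infty}u^{\binom{n}{2}}u^{n}\frac{x^{n}}{\brk[c]{n}_{s,t}!}=\sum_{n=0}^{\infty}u^{\binom{n}{2}}\frac{(ux)^{n}}{\brk[c]{n}_{s,t}!}=\exp_{s,t}(ux,u),
\]
which is the desired equation. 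The initial condition $\exp_{s,t}(0,u)=1$ is visible from the $n=0$ term.

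For completeness I would then repeat the computation in the degenerate case $s=\pm 2i\sqrt{t}$, $t<0$, where $\mathbf{D}_{\pm i\sqrt{t},t}f(x)=f^{\prime}(\pm i\sqrt{t}\,x)$ and $\brk[c]{n}_{\pm2i\sqrt{t},t}!=(\pm i\sqrt{t})^{\binom{n}{2}}n!$; after differentiating term by term and evaluating at $\pm i\sqrt{t}\,x$, the factors of $\pm i\sqrt{t}$ combine via $\binom{n+1}{2}-\binom{n}{2}=n$ to again produce $\exp_{\pm2i\sqrt{t},t}(ux,u)$. I do not anticipate a genuine obstacle; the only subtlety is justifying termwise application of $\mathbf{D}_{s,t}$, which is immediate because inside the domains of Theorem~\ref{theo_exp_conv} the series converges absolutely at both $\varphi_{s,t}x$ and $\varphi_{s,t}^{\prime}x$, so the dilated series can be subtracted termwise.
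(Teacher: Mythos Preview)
Your argument is correct and is exactly the routine termwise verification one would expect; the paper itself states this proposition without proof, so there is nothing further to compare. The monomial computation $\mathbf{D}_{s,t}x^{n}=\brk[c]{n}_{s,t}x^{n-1}$ together with the reindexing via $\binom{n+1}{2}=\binom{n}{2}+n$ is the whole content, and your remarks on termwise applicability and the degenerate case are adequate.
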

Then the function $\exp_{s,t}(x,u)$ is the $(s,t)$-analog of the deformed exponential function 
\begin{equation*}
    \Exp(x,q)=\sum_{n=0}^{\infty}q^{\binom{n}{2}}\frac{x^n}{n!},
\end{equation*}
which satisfies the functional differential equation
\begin{equation*}
    y^{\prime}(x)=y(qx),\ y(0)=1.
\end{equation*}
The function $\Exp(x,y)$ is a deformed exponential function since when $q\rightarrow 1$, then $\Exp(x,y)\rightarrow e^x$. It is closely related to the generating function for the Tutte polynomials of the complete graph $K_n$ in combinatorics \cite{gessel2}, the partition function of one-site lattice gas with fugacity $x$ and two-particle Boltzmann weight $q$ in statistical mechanics \cite{sokal2}, cell division \cite{brunt}, and brightness of the galaxy \cite{ambart}.

\section{Deformed Euler $(s,t)$-numbers}

\subsection{Definition}
We want that $\vert\varphi_{s,t}\vert>1$ and that $\vert\varphi_{s,t}^\prime\vert>1$, since in this way $\exp_{s,t}(1,1)$ makes sense. With this in mind, the following sets are defined
\begin{align*}
    E_{11}^{*}&=\{(q,u)\in E_{1}:(t,s)\in[-1,\infty)\times(1-t,\infty)\text{ 
 or }(t,s)\in(-\infty,-1)\times[2\sqrt{-t},\infty)\},\\
 E_{12}^{*}&=\{(q,u)\in E_{1}:(t,s)\in(-\infty,-1)\times(t-1,-2\sqrt{-t}]\},\\
    E_{21}^{*}&=\{(q,u)\in E_{2}:(t,s)\in(-\infty,-1)\times[2\sqrt{-t},1-t)\},\\
    E_{22}^{*}&=\{(q,u)\in E_{2}:(t,s)\in[-1,\infty)\times(-\infty,t-1)\text{ or }(t,s)\in(-\infty,-1)\times(-\infty,-2\sqrt{-t}]\}.
\end{align*}

\begin{definition}
Set $s\neq0$, $t\neq0$. If $(q,u)\in E_{11}^{*},E_{12}^{*},E_{21}^{*},E_{22}^{*}$, we define the $u$-deformed Euler $(s,t)$-numbers as
    \begin{equation}\label{eqn_ep}
        \e_{s,t,u}\equiv\exp_{s,t}(1,u)=\sum_{n=0}^{\infty}\frac{u^{\binom{n}{2}}}{\brk[c]{n}_{s,t}!}.
    \end{equation}
Also, we denote
\begin{equation*}
    \e_{s,t}\equiv\exp_{s,t}(1,1)
\end{equation*}
and
\begin{equation*}
    \e_{s,t,u}^{-1}\equiv\sum_{n=0}^{\infty}(-1)^{n}\frac{u^{\binom{n}{2}}}{\brk[c]{n}_{s,t}!}.
\end{equation*}
\end{definition}
Some important specializations are
\begin{enumerate}
    \item Deformed Euler $(2,-1)$-number or deformed Euler number
    \begin{equation*}
        \e_{u}\equiv\e_{2,-1,u}=\sum_{n=0}^{\infty}\frac{u^{\binom{n}{2}}}{n!}
    \end{equation*}
    \item Deformed Euler $(1,1)$-number or deformed Fibonacci-Euler number
    \begin{equation*}
        \e_{F,u}\equiv\e_{1,1,u}=\sum_{n=0}^{\infty}\frac{u^{\binom{n}{2}}}{F_{n}!}
    \end{equation*}
    \item Deformed Euler $(2,1)$-number or deformed Pell-Euler number
    \begin{equation*}
        \e_{P,u}\equiv\e_{2,1,u}=\sum_{n=0}^{\infty}\frac{u^{\binom{n}{2}}}{P_{n}!}
    \end{equation*}
    \item Deformed Euler $(1,2)$-number or deformed Jacobsthal-Euler number
    \begin{equation*}
        \e_{J,u}\equiv\e_{1,2,u}=\sum_{n=0}^{\infty}\frac{u^{\binom{n}{2}}}{J_{n}!}
    \end{equation*}
    \item Deformed Euler $(3,-2)$-number or deformed Mersenne-Euler number
    \begin{equation*}
        \e_{M,u}\equiv\e_{3,-2,u}=\sum_{n=0}^{\infty}\frac{u^{\binom{n}{2}}}{M_{n}!}
    \end{equation*}
\end{enumerate}

Likewise, we want $\exp_{s,t}(1,u)$ to make sense either when $u=\vert\varphi_{s,t}\vert$ or when $u=\vert\varphi_{s,t}^\prime\vert$. So we define the following sets
\begin{align*}
    D_{11}^{*}&=\{(t,s)\in(-\infty,0)\times(2\sqrt{-t},\infty)\},\\
    D_{12}^{*}&=\{(t,s)\in(-\infty,0)\times(-3\sqrt{-2t}/2,-2\sqrt{-t})\},\\
    D_{21}^{*}&=\{(t,s)\in(-\infty,0)\times(2\sqrt{-t},3\sqrt{-2t}/2)\},\\
    D_{22}^{*}&=\{(t,s)\in(-\infty,0)\times(-\infty,-2\sqrt{-t})\}.
\end{align*}

\begin{definition}
Set $s\neq0$, $t\neq0$. If $(t,s)\in D_{11}^{*},D_{12}^{*}$, we define the $\varphi$-deformed Euler $(s,t)$-numbers as
    \begin{equation*}
        \e_{s,t,\varphi}\equiv\exp_{s,t}(1,\varphi_{s,t})=\sum_{n=0}^{\infty}\frac{\varphi_{s,t}^{\binom{n}{2}}}{\brk[c]{n}_{s,t}!}.
    \end{equation*}
Also, we denote
\begin{equation*}
    \e_{s,t,\varphi_{s,t}}^{-1}\equiv\sum_{n=0}^{\infty}(-1)^{n}\frac{\varphi_{s,t}^{\binom{n}{2}}}{\brk[c]{n}_{s,t}!}.
\end{equation*}
Equally, if $(t,s)\in D_{21}^{*},D_{22}^{*}$, we define the $\varphi^\prime$-deformed Euler $(s,t)$-numbers as
    \begin{equation*}
        \e_{s,t,\varphi^\prime}\equiv\exp_{s,t}(1,\varphi_{s,t}^\prime)=\sum_{n=0}^{\infty}\frac{\varphi_{s,t}^{\prime\binom{n}{2}}}{\brk[c]{n}_{s,t}!}.
    \end{equation*}
Also, we denote
\begin{equation*}
    \e_{s,t,\varphi_{s,t}^\prime}^{-1}\equiv\sum_{n=0}^{\infty}(-1)^{n}\frac{\varphi_{s,t}^{\prime\binom{n}{2}}}{\brk[c]{n}_{s,t}!}.
\end{equation*}
\end{definition}

\subsection{Estimating the numbers $\e_{s,t}$}

\begin{theorem}
Suppose that $\varphi_{s,t}>1$. Then
\begin{equation*}
    2+\frac{1}{s}+h(s,t)<\e_{s,t}<2+\frac{1}{s}+h(s,t)+\frac{1}{\brk[c]{7}_{s,t}!}\frac{1}{\brk[c]{8}_{s,t}-1}
\end{equation*}
where $h(s,t)=\frac{1}{\brk[c]{3}_{s,t}!}+\frac{1}{\brk[c]{4}_{s,t}}+\frac{1}{\brk[c]{5}_{s,t}}+\frac{1}{\brk[c]{6}_{s,t}!}$.
\end{theorem}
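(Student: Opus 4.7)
The strategy is a standard geometric-tail estimate, paralleling the classical proof that $e=\sum 1/n!$ lies strictly between explicit partial sums and those partial sums plus a geometric remainder. I would split the series defining $\e_{s,t}$ as
\begin{equation*}
    \e_{s,t}=\sum_{n=0}^{7}\frac{1}{\brk[c]{n}_{s,t}!}+\sum_{n=8}^{\infty}\frac{1}{\brk[c]{n}_{s,t}!},
\end{equation*}
and identify the finite sum with $2+\frac{1}{s}+h(s,t)$. This uses only the initial values $\brk[c]{0}_{s,t}!=\brk[c]{1}_{s,t}!=1$ and $\brk[c]{2}_{s,t}=s$, so the first three summands already contribute $2+\frac{1}{s}$, and the remaining summands of the partial sum are exactly the terms collected in $h(s,t)$.

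For the lower bound, the hypothesis $\varphi_{s,t}>1$ together with Lemma~\ref{lemma_prop_nst}(1) guarantees that $\brk[c]{n}_{s,t}$ is strictly positive and strictly increasing. Hence every summand in the tail is positive, and the partial sum is a strict lower bound for $\e_{s,t}$.

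For the upper bound, I would factor $1/\brk[c]{7}_{s,t}!$ out of the tail,
\begin{equation*}
    \sum_{n\geq 8}\frac{1}{\brk[c]{n}_{s,t}!}=\frac{1}{\brk[c]{7}_{s,t}!}\sum_{n\geq 8}\frac{1}{\brk[c]{8}_{s,t}\brk[c]{9}_{s,t}\cdots\brk[c]{n}_{s,t}},
\end{equation*}
and invoke monotonicity (Lemma~\ref{lemma_prop_nst}(1)) in the form $\brk[c]{k}_{s,t}\geq\brk[c]{8}_{s,t}$ for every $k\geq 8$ to bound each denominator product by $\brk[c]{8}_{s,t}^{n-7}$. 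Summing the resulting geometric series gives
\begin{equation*}
    \sum_{k\geq 1}\brk[c]{8}_{s,t}^{-k}=\frac{1}{\brk[c]{8}_{s,t}-1},
\end{equation*}
which is legitimate since $\brk[c]{8}_{s,t}\geq 8>1$ by Lemma~\ref{lemma_prop_nst}(3). Multiplying the factor $1/\brk[c]{7}_{s,t}!$ back in and adding the initial partial sum yields exactly the stated upper bound.

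There is no substantive obstacle; the argument is the classical tail estimate for $e$, transported to the $(s,t)$-setting via monotonicity of $\brk[c]{n}_{s,t}$. The only point to keep track of is the starting index of the tail: splitting \emph{after} the $1/\brk[c]{7}_{s,t}!$ term and bounding the remaining denominators by $\brk[c]{8}_{s,t}^{n-7}$ is what produces exactly $1/(\brk[c]{8}_{s,t}-1)$ rather than $\brk[c]{8}_{s,t}/(\brk[c]{8}_{s,t}-1)$.
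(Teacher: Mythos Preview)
Your approach is essentially identical to the paper's: split off a partial sum, use positivity of the terms for the lower bound, and dominate the tail by a geometric series with ratio $1/\brk[c]{8}_{s,t}$ for the upper bound. One bookkeeping slip to flag: the finite sum $\sum_{n=0}^{7}1/\brk[c]{n}_{s,t}!$ equals $2+\tfrac{1}{s}+h(s,t)+\tfrac{1}{\brk[c]{7}_{s,t}!}$, not $2+\tfrac{1}{s}+h(s,t)$, so your splitting actually yields the (slightly weaker) upper bound $2+\tfrac{1}{s}+h(s,t)+\tfrac{1}{\brk[c]{7}_{s,t}!}\cdot\tfrac{\brk[c]{8}_{s,t}}{\brk[c]{8}_{s,t}-1}$; the paper's own proof has the same off-by-one inconsistency between where the partial sum ends and the form of the remainder, so this is an artifact of the statement rather than a flaw in your method.
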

\begin{proof}
To estimate $\e_{s,t}$ we have the sum
\begin{align*}
    \e_{s,t}=\sum_{n=0}^{\infty}\frac{1}{\brk[c]{n}_{s,t}!}&=\frac{1}{\brk[c]{0}_{s,t}!}+\frac{1}{\brk[c]{1}_{s,t}!}+\frac{1}{\brk[c]{2}_{s,t}!}+h(s,t)+\sum_{n=7}^{\infty}\frac{1}{\brk[c]{n}_{s,t}!}\\
    &=2+\frac{1}{s}+h(s,t)+\sum_{n=7}^{\infty}\frac{1}{\brk[c]{n}_{s,t}!}
\end{align*}
 This gives the lower bound
\begin{equation*}
    2+\frac{1}{s}+h(s,t)<\e_{s,t}.
\end{equation*}
To get the upper bound, we combine 
\begin{align*}
    \e_{s,t}&=2+\frac{1}{s}+h(s,t)+\frac{1}{\brk[c]{7}_{s,t}!}\left(1+\frac{1}{\brk[c]{8}_{s,t}}+\frac{1}{\brk[c]{8}_{s,t}\brk[c]{9}_{s,t}}+\cdots\right).
\end{align*}
Since,
\begin{equation*}
    \brk[c]{n}_{s,t}>\brk[c]{8}_{s,t}\text{ for all }n>8, \Longrightarrow\frac{1}{\brk[c]{n}_{s,t}}<\frac{1}{\brk[c]{8}_{s,t}},
\end{equation*}
then
\begin{align*}
    \e_{s,t}&<2+\frac{1}{s}+h(s,t)+\frac{1}{\brk[c]{8}_{s,t}!}\left(1+\frac{1}{\brk[c]{8}_{s,t}}+\frac{1}{\brk[c]{8}_{s,t}^2}+\frac{1}{\brk[c]{8}_{s,t}^3}+\cdots\right)\\
    &<2+\frac{1}{s}+h(s,t)+\frac{1}{\brk[c]{8}_{s,t}!}\frac{1}{1-\frac{1}{\brk[c]{8}_{s,t}}}\\
    &<2+\frac{1}{s}+h(s,t)+\frac{1}{\brk[c]{7}_{s,t}!}\frac{1}{\brk[c]{8}_{s,t}-1}.
\end{align*}    
The proof is reached.
\end{proof}
The following approaches follow
\begin{align*}
    3.70416<&\e_{F}<3.70418,\\
    2.6086247947<&\e_{P}<2.6086247948,\\
    3.406355917<&\e_{J}<3.406355918,\\
    2.3842310161<&\e_{M}<2.3842310162.
\end{align*}
The estimate for $\e_{F}$ was obtained in \cite{merve}.

\section{$\e_{s,t,u^{-1}}$ and $\e_{s,t,u^{-1}}^{-1}$, $u\in\Q$, $\vert u\vert>1$, are irrationals}

\begin{theorem}\label{theo_e_irra}
Set $s,t\in\Z$ such that $s\neq0$, $t\neq0$ and $s^2+4t>0$ and set $u\in\Q$, $u>1$. Then $\e_{s,t,u^{-1}}$ is irrational.
\end{theorem}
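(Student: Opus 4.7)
The plan is a Fourier--Erd\H{o}s-style irrationality proof, modeled on the classical argument for $e=\sum 1/n!$ but adapted to the super-exponentially convergent $(s,t)$-series. Suppose for contradiction that $\e_{s,t,u^{-1}}=P/Q$ in lowest terms with $P,Q\in\Z_{>0}$, and write $u=a/b$ in lowest terms with coprime positive integers $a>b\geq 1$, so that $u^{-\binom{n}{2}}=b^{\binom{n}{2}}/a^{\binom{n}{2}}$. Using Lemma~1 together with the identity $\brk[c]{n}_{s,t}!=(-1)^{\binom{n}{2}}\brk[c]{n}_{|s|,t}!$ for $s<0$ to absorb signs into $u$, I reduce to the case $s>0$, where $\brk[c]{n}_{s,t}$ is a strictly increasing sequence of positive integers (Lemma~2).

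The core step is to multiply the putative equality by the clearing factor $d_N:=Q\,a^{\binom{N}{2}}\brk[c]{N}_{s,t}!$ and split $\e_{s,t,u^{-1}}=S_N+T_N$ into partial sum and tail. For each $0\leq n\leq N$,
$$d_N\cdot\frac{u^{-\binom{n}{2}}}{\brk[c]{n}_{s,t}!}\;=\;Q\,a^{\binom{N}{2}-\binom{n}{2}}\,b^{\binom{n}{2}}\,\frac{\brk[c]{N}_{s,t}!}{\brk[c]{n}_{s,t}!}$$
is a positive integer, since the factorial quotient equals $\brk[c]{n+1}_{s,t}\cdots\brk[c]{N}_{s,t}\in\Z_{>0}$. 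Combined with $d_N\cdot\e_{s,t,u^{-1}}=P\,a^{\binom{N}{2}}\brk[c]{N}_{s,t}!\in\Z$, this forces $d_N T_N=d_N\e_{s,t,u^{-1}}-d_N S_N$ to be a positive integer, so $d_N T_N\geq 1$. Comparing consecutive tail terms gives $T_N\leq 2u^{-\binom{N+1}{2}}/\brk[c]{N+1}_{s,t}!$ for $N$ large, and simplifying $a^{\binom{N}{2}}u^{-\binom{N+1}{2}}=b^{\binom{N+1}{2}}/a^N$ reduces the bound to
$$d_N T_N\;\leq\;\frac{2Q\,b^{\binom{N+1}{2}}}{a^N\,\brk[c]{N+1}_{s,t}}.$$

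The closing step is to show the right-hand side is $<1$ for some $N$, contradicting $d_N T_N\geq 1$. When $b=1$ (so $u\in\Z$) the bound becomes $2Q/(a^N\brk[c]{N+1}_{s,t})$, which tends to $0$ because $\brk[c]{N+1}_{s,t}$ grows like $\varphi_{s,t}^{N}$ by Lemma~2, and the argument closes immediately. The main obstacle I anticipate is the genuinely rational case $b\geq 2$: then $b^{\binom{N+1}{2}}$ grows super-exponentially in $N$ and overwhelms the merely exponential denominator $a^N\brk[c]{N+1}_{s,t}\asymp(a\varphi_{s,t})^N$, so the crude clearing factor $d_N$ is wastefully large. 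I expect the resolution to require either a sharper common-denominator analysis exploiting divisibility (rank of apparition) of $a$ in the Lucas-like sequence $\brk[c]{n}_{s,t}$ — replacing $d_N$ by a genuine LCM much smaller than $a^{\binom{N}{2}}\brk[c]{N}_{s,t}!$ — or a Pad\'e-type construction of rational approximations to $\e_{s,t,u^{-1}}$ that beats the naive truncation $S_N$, so that the factor $b^{\binom{N+1}{2}}$ is absorbed. Making one of these precise is the technical heart of the argument.
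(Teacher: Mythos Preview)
Your argument for integer $u$ (the case $b=1$) is essentially the paper's: clear denominators with $u^{\binom{N}{2}}\brk[c]{N}_{s,t}!$ times the putative denominator, bound the tail via Lemma~\ref{lemma_prop_nst} by $1/\brk[c]{N}_{s,t}<1$, and obtain a positive integer strictly below $1$. Your reduction from $s<0$ to $s>0$ via $\brk[c]{n}_{-s,t}=(-1)^{n-1}\brk[c]{n}_{s,t}$ is likewise the paper's device.

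You have, however, correctly located a genuine gap in the non-integer rational case $u=a/b$ with $b\geq2$. Your clearing factor $d_N=Q\,a^{\binom{N}{2}}\brk[c]{N}_{s,t}!$ does force $d_N S_N$ and $d_N\e_{s,t,u^{-1}}$ to be integers, but the resulting tail bound $d_NT_N\leq 2Q\,b^{\binom{N+1}{2}}\big/\bigl(a^{N}\brk[c]{N+1}_{s,t}\bigr)$ diverges: the numerator grows like $b^{N^2/2}$ while the denominator is only exponential in $N$, so no choice of $N$ closes the argument. The paper's own proof does attempt to treat this case, but its reasoning there is not sound: it asks whether $m^{\binom{q+1}{2}}$ ``divides'' the real number $\brk[c]{q}_{s,t}!(\e_{s,t,u^{-1}}-s_q)$ and asserts, without justification, that if it does not then the cleared quantity is ``a fraction larger than $1$'' --- which would directly contradict the tail estimate it just proved. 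So the paper does not actually supply the missing idea either; the theorem as stated for general $u\in\Q$, $u>1$, is not established by these methods. Your proposed fixes (a sharper common denominator exploiting the rank of apparition of $a$ in $\brk[c]{n}_{s,t}$, or Pad\'e-type approximants) are plausible avenues, but note that the naive truncation $S_N$ already has denominator $a^{\binom{N}{2}}\brk[c]{N}_{s,t}!$ exactly, so no smaller clearing factor is available without changing the approximant itself.
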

\begin{proof}
Set $s>0$. First, we prove the following inequality
\begin{equation*}
    \e_{s,t,u^{-1}}-s_{n}<\frac{1}{u^{\binom{n+1}{2}}\brk[c]{n}_{s,t}!\brk[c]{n}_{s,t}},
\end{equation*}
where $s_{n}$ is the partial sum $\sum_{k=0}^{n}(1/u^{\binom{n}{2}}\brk[c]{n}_{s,t}!)$. Set $u\in\N$ and denote $s_{n}$ the partial sum of $\e_{s,t,u^{-1}}$
\begin{equation*}
    s_{n}=\sum_{k=0}^{n}\frac{1}{u^{\binom{k}{2}}\brk[c]{k}_{a,b}!}.
\end{equation*}
Then from Lemma \ref{lemma_prop_nst}
\begin{align*}
    \e_{s,t,u^{-1}}-s_{n}&=\frac{1}{u^{\binom{n+1}{2}}\brk[c]{n+1}_{s,t}!}+\frac{1}{u^{\binom{n+2}{2}}\brk[c]{n+2}_{s,t}!}+\frac{1}{u^{\binom{n+3}{2}}\brk[c]{n+3}_{s,t}!}+\cdots\\
    &=\frac{1}{u^{\binom{n+1}{2}}\brk[c]{n+1}_{s,t}!}\left(1+\frac{1}{u^{n+1}\brk[c]{n+2}_{s,t}}+\frac{1}{u^{2n+3}\brk[c]{n+2}_{s,t}\brk[c]{n+3}_{s,t}}+\right)\\
    &<\frac{1}{u^{\binom{n+1}{2}}\brk[c]{n+1}_{s,t}}\left(1+\frac{1}{\brk[c]{n+1}_{s,t}}+\frac{1}{\brk[c]{n+1}_{s,t}^2}+\cdots\right)\\
    &=\frac{1}{u^{\binom{n+1}{2}}\brk[c]{n+1}_{s,t}!}\frac{\brk[c]{n+1}_{s,t}}{\brk[c]{n+1}_{s,t}-1}\\
    &<\frac{1}{u^{\binom{n+1}{2}}\brk[c]{n}_{s,t}!\brk[c]{n}_{s,t}}.
\end{align*}
Therefore
\begin{equation}\label{eqn_esti_e}
    0<\e_{s,t,u^{-1}}-s_{n}<\frac{1}{u^{\binom{n+1}{2}}\brk[c]{n}_{s,t}!\brk[c]{n}_{s,t}}.
\end{equation}
Suppose that $\e_{s,t,u^{-1}}$ is a rational number, that is, $\e_{s,t,u^{-1}}=p/q$, where $p,q$ are positive integers. From Eq.(\ref{eqn_esti_e}) and Lemma \ref{lemma_prop_nst} statement 3,
\begin{equation*}
    0<u^{\binom{q+1}{2}}\brk[c]{q}_{s,t}!q(\e_{s,t,u^{-1}}-s_{q})<\frac{q}{\brk[c]{q}_{s,t}}<1
\end{equation*}
Set $u=\frac{n}{m}$, $n,m\in\N$. If $u$ is an integer, then both $u^{\binom{q+1}{2}}\brk[c]{q}_{s,t}!q\e_{s,t,u^{-1}}$ and
\begin{equation*}
    u^{\binom{q+1}{2}}\brk[c]{q}_{s,t}!qs_{q}=u^{\binom{q+1}{2}}\brk[c]{q}_{s,t}!q\left(1+\frac{1}{u^{\binom{1}{2}}\brk[c]{1}_{a,b}!}+\frac{1}{u^{\binom{2}{2}}\brk[c]{2}_{a,b}!}+\cdots+\frac{1}{u^{\binom{q}{2}}\brk[c]{q}_{a,b}!}\right)
\end{equation*}
are integers too. If $n$ and $m$ are co-prime and $m^{\binom{q+1}{2}}$ divide $\brk[c]{q}_{s,t}!(\e_{s,t,u^{-1}}-s_{q})$, then $u^{\binom{q+1}{2}}\brk[c]{q}_{s,t}!q(\e_{s,t,u^{-1}}-s_{q})$ is an integer between 0 and 1. If $m^{\binom{q+1}{2}}$ does not divide $\brk[c]{q}_{s,t}!(\e_{s,t,u^{-1}}-s_{q})$, then $u^{\binom{q+1}{2}}\brk[c]{q}_{s,t}!(\e_{s,t,u^{-1}}-s_{q})$ is a fraction larger than 1. Therefore the assumption that $\e_{s,t,u^{-1}}$ is a rational is false and for that reason $\e_{s,t,u^{-1}}$ is irrational.
If $s<0$, then 
\begin{equation*}
    0<\vert\e_{-s,t,u^{-1}}-s_{n}\vert<\frac{1}{u^{\binom{n+1}{2}}\brk[c]{n}_{\vert s\vert,t}!\brk[c]{n}_{\vert s\vert,t}}.
\end{equation*}
Now use the above argument. The proof is reached.
\end{proof}

\begin{theorem}\label{theo_e_inv_irra}
Set $s,t\in\Z$ such that $s\neq0$, $t\neq0$ and $s^2+4t\geq0$ and set $u\in\Q$, $u>1$. Then $\e_{s,t,u^{-1}}^{-1}$ is irrational.
\end{theorem}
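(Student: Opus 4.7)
The plan is to mimic the structure of the proof of Theorem~\ref{theo_e_irra}, replacing the geometric bound on the positive tail by Leibniz's alternating-series estimate. I would first reduce to the case $s>0$ via $|\brk[c]{n}_{s,t}|=\brk[c]{n}_{|s|,t}$ (Lemma~\ref{lem_abs_nst}), then set $a_k := 1/(u^{\binom{k}{2}}\brk[c]{k}_{s,t}!)$ so that $\e_{s,t,u^{-1}}^{-1}=\sum_{k\geq 0}(-1)^k a_k$, with partial sums $\sigma_n$.

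Since $u>1$ and $\brk[c]{k+1}_{s,t}\geq 1$, the ratio $a_{k+1}/a_k=1/(u^{k}\brk[c]{k+1}_{s,t})$ lies strictly in $(0,1)$, so $(a_k)$ is strictly decreasing and tends to zero. Leibniz's estimate then gives
\begin{equation*}
0 < (-1)^{n+1}\bigl(\e_{s,t,u^{-1}}^{-1} - \sigma_n\bigr) < a_{n+1}= \frac{1}{u^{\binom{n+1}{2}}\brk[c]{n+1}_{s,t}!},
\end{equation*}
so the remainder $R_n := \e_{s,t,u^{-1}}^{-1}-\sigma_n$ is strictly non-zero. Using $\brk[c]{n+1}_{s,t}>\brk[c]{n}_{s,t}$ (Lemma~\ref{lemma_prop_nst}(1)) to expand the factorial, this recovers the same shape $|R_n| < 1/\bigl(u^{\binom{n+1}{2}}\brk[c]{n}_{s,t}!\,\brk[c]{n}_{s,t}\bigr)$ that drove the previous proof.

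Assuming for contradiction that $\e_{s,t,u^{-1}}^{-1}=p/q$ with $p,q\in\Z$ and $q>0$ (taken large enough that $q<\brk[c]{q}_{s,t}$, by replacing $p/q$ with $np/(nq)$ if necessary), I would first handle integer $u$ by multiplying $R_q$ by $M_q := u^{\binom{q+1}{2}}\brk[c]{q}_{s,t}!\,q$. Then $M_q\e_{s,t,u^{-1}}^{-1} = u^{\binom{q+1}{2}}\brk[c]{q}_{s,t}!\,p\in\Z$, while
\begin{equation*}
M_q\sigma_q \;=\; \sum_{k=0}^{q}(-1)^k\, q\,u^{\binom{q+1}{2}-\binom{k}{2}}\,\frac{\brk[c]{q}_{s,t}!}{\brk[c]{k}_{s,t}!} \;\in\; \Z
\end{equation*}
term-by-term (the exponent of $u$ is non-negative for $k\leq q$, and $\brk[c]{k}_{s,t}!$ divides $\brk[c]{q}_{s,t}!$); hence $M_q R_q\in\Z$. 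Combining the bound above with Lemma~\ref{lemma_prop_nst}(3) produces
\begin{equation*}
0 < |M_q R_q| < \frac{q}{\brk[c]{q}_{s,t}} < 1,
\end{equation*}
the desired contradiction. For $u=n/m$ with $\gcd(n,m)=1$, I would replace $u^{\binom{q+1}{2}}$ by $n^{\binom{q+1}{2}}$ throughout and reproduce the $m^{\binom{q+1}{2}}$-divisibility dichotomy at the close of the proof of Theorem~\ref{theo_e_irra}.

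The hard part is confirming $R_q\neq 0$: in a positive series this is automatic, but in the alternating setting it has to be earned from the strict monotonicity of $(a_k)$, which simultaneously pins down the sign of $R_q$ and keeps it bounded away from zero (specifically $|R_q| > a_{q+1}-a_{q+2}>0$). The degenerate case $s^2+4t=0$ forces $s=\pm 2$, $t=-1$ among integer parameters, and there $|\brk[c]{n}_{\pm 2,-1}|=n$, so every step above goes through with $\brk[c]{n}_{s,t}!$ replaced by the classical $n!$.
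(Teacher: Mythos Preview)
Your proposal is correct and follows essentially the same route as the paper: both arguments invoke the Leibniz alternating-series remainder estimate (the paper cites Apostol, Theorem~8.16), clear denominators by multiplying through by $u^{\binom{N}{2}}\brk[c]{N}_{s,t}!\,q$, and obtain a nonzero integer of absolute value below $q/\brk[c]{N+1}_{s,t}<1$. The only cosmetic difference is that the paper truncates at an odd index $2n-1\geq q$ whereas you truncate at $q$ itself (enlarging $q$ if needed); your treatment of the strict nonvanishing of the remainder and of the degenerate case $s^2+4t=0$ is in fact more explicit than the paper's.
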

\begin{proof}
Set $s>0$. First, we prove the following inequality
\begin{equation}
    \e_{s,t,u^{-1}}^{-1}-s_{2n-1}<\frac{1}{u^{\binom{n}{2}}\brk[c]{2n}_{s,t}!},
\end{equation}
where $s_{2n-1}$ is the partial sum $\sum_{k=0}^{2n-1}((-1)^k/u^{\binom{k}{2}}\brk[c]{k}_{s,t}!)$.
From Theorem 8.16 of \cite{apostol}
\begin{align*}
    \e_{s,t,u^{-1}}^{-1}-s_{2n-1}&=\sum_{k=0}^{\infty}(-1)^{k}\frac{1}{u^{\binom{k}{2}}\brk[c]{k}_{s,t}!}-\sum_{k=0}^{2n-1}(-1)^{k}\frac{1}{u^{\binom{k}{2}}\brk[c]{k}_{s,t}!}<\frac{1}{u^{\binom{2n}{2}}\brk[c]{2n}_{s,t}!}.
\end{align*}    
Note that if $u$ is an integer, then $u^{\binom{2n-1}{2}}\brk[c]{2n-1}_{s,t}!s_{2n-1}$ is always an integer. Assume that $\e_{s,t,u^{-1}}^{-1}$ is rational, so $p/q$, where $p$ and $q$ are co-prime, and $q\neq0$. Choose $n$ such that $n\geq(q+1)/2$. Then $u^{\binom{2n-1}{2}}\brk[c]{2n-1}_{s,t}!q\e_{s,t,u^{-1}}^{-1}$ is an integer too. Therefore, $u^{\binom{2n-1}{2}}\brk[c]{2n-1}_{s,t}!q(\e_{s,t,u^{-1}}^{-1}-s_{2n-1})$ is an integer less than $q/\brk[c]{2n}_{s,t}$, which is not possible because of the Lemma \ref{lemma_prop_nst}. Suppose that $u$ is a fraction of the form $n/m$, with $n>m$. If $m^{\binom{2n-1}{2}}$ divide to $\brk[c]{2n-1}_{s,t}!q(\e_{s,t,u^{-1}}^{-1}-s_{2n-1})$, then
\begin{equation}\label{eqn_proof_e}
    u^{\binom{2n-1}{2}}\brk[c]{2n-1}_{s,t}!q(\e_{s,t,u^{-1}}^{-1},s_{2n-1})
\end{equation}
is an integer. If $m^{\binom{2n-1}{2}}$ does not divide $\brk[c]{2n-1}_{s,t}!q(\e_{s,t,u^{-1}}^{-1}-s_{2n-1})$, then Eq(\ref{eqn_proof_e}) is a fraction larger than 1. Therefore, the assumption that $\e_{s,t,u^{-1}}^{-1}$ is a rational is false and thus $\e_{s,t,u^{-1}}^{-1}$ is irrational. If $s<0$, then use
\begin{equation*}
    0<\vert\e_{s,t,u^{-1}}^{-1}-s_{2n-1}\vert<\frac{1}{u^{\binom{n}{2}}\brk[c]{2n}_{\vert s\vert,t}!}
\end{equation*}
and the previous argument. The proof is achieved.
\end{proof}

\begin{proposition}
For all $a\in\R$ it holds that $\brk[c]{n}_{as,a^2t}=a^{n-1}\brk[c]{n}_{s,t}$, which implies that $\brk[c]{n}_{as,a^2t}!=a^{\binom{n}{2}}\brk[c]{n}_{s,t}!$.
\end{proposition}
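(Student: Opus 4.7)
The proof is a direct application of Binet's $(s,t)$-identity, and was in fact already sketched inline earlier in the paper. The plan has three steps.

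First, I would verify that the characteristic polynomial $x^2-(as)x-a^2t$ attached to the recurrence defining $\brk[c]{n}_{as,a^2t}$ has roots $a\varphi_{s,t}$ and $a\varphi_{s,t}^{\prime}$. This is a one-line substitution: $(a\varphi_{s,t})^2-(as)(a\varphi_{s,t})-a^2t=a^2(\varphi_{s,t}^2-s\varphi_{s,t}-t)=0$, and likewise for $\varphi_{s,t}^{\prime}$. For $a>0$ this yields $\varphi_{as,a^2t}=a\varphi_{s,t}$ and $\varphi_{as,a^2t}^{\prime}=a\varphi_{s,t}^{\prime}$; for $a<0$, the extraction $\sqrt{a^2s^2+4a^2t}=|a|\sqrt{s^2+4t}$ swaps the two roots, but this does not affect the symmetric Binet expression in the next step.

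Second, apply Binet's identity to the scaled sequence:
\begin{equation*}
\brk[c]{n}_{as,a^2t}=\frac{(a\varphi_{s,t})^{n}-(a\varphi_{s,t}^{\prime})^{n}}{a\varphi_{s,t}-a\varphi_{s,t}^{\prime}}=\frac{a^{n}}{a}\cdot\frac{\varphi_{s,t}^{n}-\varphi_{s,t}^{\prime n}}{\varphi_{s,t}-\varphi_{s,t}^{\prime}}=a^{n-1}\brk[c]{n}_{s,t}.
\end{equation*}
For the degenerate case $\Delta=0$, equation~(\ref{eqn_fibo_dege}) gives $\brk[c]{n}_{\pm2i\sqrt{a^2t},a^2t}=n(\pm i\sqrt{a^2t})^{n-1}=a^{n-1}\cdot n(\pm i\sqrt{t})^{n-1}=a^{n-1}\brk[c]{n}_{\pm2i\sqrt{t},t}$ for $a\geq0$, and the sign tracking for $a<0$ is routine. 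The case $a=0$ is consistent with the convention $0^0=1$ at $n=1$ and the recurrence forcing $\brk[c]{n}_{0,0}=0$ for $n\geq2$.

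Third, I would take the product from $k=1$ to $n$ to derive the factorial identity:
\begin{equation*}
\brk[c]{n}_{as,a^2t}!=\prod_{k=1}^{n}\brk[c]{k}_{as,a^2t}=\prod_{k=1}^{n}a^{k-1}\brk[c]{k}_{s,t}=a^{\sum_{k=1}^{n}(k-1)}\brk[c]{n}_{s,t}!=a^{\binom{n}{2}}\brk[c]{n}_{s,t}!.
\end{equation*}
There is no genuine obstacle; the proposition is a one-line consequence of Binet's formula applied to rescaled parameters. The only minor care needed is tracking the sign of $a$ when extracting $\sqrt{a^2}$ in the definition of $\varphi_{as,a^2t}$, but the symmetry of Binet's expression under the $\varphi\leftrightarrow\varphi^{\prime}$ swap makes this inessential.
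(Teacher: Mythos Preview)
Your proof is correct and follows essentially the same route as the paper: both identify that the roots of the rescaled characteristic polynomial are $a\varphi_{s,t}$ and $a\varphi_{s,t}^{\prime}$ (with a swap when $a<0$), then plug into Binet's identity to extract the factor $a^{n-1}$. Your write-up is in fact more complete than the paper's, since you also treat the degenerate case $\Delta=0$, the boundary case $a=0$, and spell out the factorial product explicitly.
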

\begin{proof}
It is easy to notice that if $a>0$, then $\varphi_{as,a^2t}=a\varphi_{s,t}$ and $\varphi_{as,a^2t}^{\prime}=a\varphi_{s,t}^\prime$. Then
\begin{align*}
    \brk[c]{n}_{as,a^2t}&=\frac{(a\varphi_{s,t})^n-(a\varphi_{s,t}^\prime)^n}{a(\varphi_{s,t}-\varphi_{s,t}^\prime)}=a^{n-1}\brk[c]{n}_{s,t}.
\end{align*}
If $a<0$, then $\varphi_{as,a^2t}=a\varphi_{s,t}^\prime$ and $\varphi_{as,a^2t}^{\prime}=a\varphi_{s,t}$. Therefore, $\brk[c]{n}_{as,a^2t}=a^{n-1}\brk[c]{n}_{a,t}$ for all $a\in\R$, $a\neq0$.    
\end{proof}

\begin{corollary}
Set $s,t\in\Z$ such that $s>0$, $t\neq0$ and $s^2+4t\geq0$ and set $u\in\Q$, $u>1$. Then  $\e_{as,a^2t,u^{-1}}$ and $\e_{as,a^2t,u^{-1}}^{-1}$ are irrational, for all $a\in\Q$ such that $\vert au\vert>1$.
\end{corollary}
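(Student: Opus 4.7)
The plan is to reduce the corollary to Theorems \ref{theo_e_irra} and \ref{theo_e_inv_irra} via the substitution $(s,t)\mapsto(as,a^2t)$ supplied by the preceding Proposition. Using the identity $\brk[c]{n}_{as,a^2t}!=a^{\binom{n}{2}}\brk[c]{n}_{s,t}!$, a termwise computation in the defining series gives
\begin{align*}
\e_{as,a^2t,u^{-1}}
=\sum_{n=0}^{\infty}\frac{u^{-\binom{n}{2}}}{a^{\binom{n}{2}}\brk[c]{n}_{s,t}!}
=\sum_{n=0}^{\infty}\frac{(au)^{-\binom{n}{2}}}{\brk[c]{n}_{s,t}!}
=\e_{s,t,(au)^{-1}},
\end{align*}
and inserting a factor $(-1)^{n}$ in each summand yields the matching identity $\e_{as,a^2t,u^{-1}}^{-1}=\e_{s,t,(au)^{-1}}^{-1}$. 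Setting $v:=au\in\Q$, the hypothesis $|au|>1$ becomes $|v|>1$, so the claim is reduced to the irrationality of $\e_{s,t,v^{-1}}$ and $\e_{s,t,v^{-1}}^{-1}$.

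When $v>1$ --- equivalently $a>0$, since $u>1$ is positive --- this is exactly Theorems \ref{theo_e_irra} and \ref{theo_e_inv_irra} applied to the new rational parameter $v$, and the proof is complete. The remaining case $v<-1$ (i.e.\ $a<0$) calls for only a cosmetic extension of those theorems: the tail estimate still holds in absolute value,
\begin{equation*}
|\e_{s,t,v^{-1}}-s_n|<\frac{1}{|v|^{\binom{n+1}{2}}\brk[c]{n}_{s,t}!\brk[c]{n}_{s,t}},
\end{equation*}
and the clearing-denominators argument of Theorem \ref{theo_e_irra} --- multiplying by $v^{\binom{q+1}{2}}\brk[c]{q}_{s,t}!q$ and invoking Lemma \ref{lemma_prop_nst}(3) --- carries over without change, since the divisibility dichotomy on $m^{\binom{q+1}{2}}$ (where $v=n/m$ in lowest terms) is insensitive to the sign of the numerator $n$.

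The main obstacle I anticipate is precisely this sign issue in the $a<0$ case, since the two theorems are stated only for positive rational $u$. Once one confirms that their proofs tolerate a negative rational parameter of absolute value exceeding $1$ --- a purely notational check, because the tail bound and the integrality contradiction both live inside an absolute-value sign --- the corollary follows simultaneously for $\e_{as,a^2t,u^{-1}}$ and $\e_{as,a^2t,u^{-1}}^{-1}$. If instead the statement is read as implicitly restricted to $a>0$ (so that automatically $au>1$), no extension is required and the Proposition combined with Theorems \ref{theo_e_irra} and \ref{theo_e_inv_irra} yields the result immediately.
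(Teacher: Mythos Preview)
Your argument is essentially identical to the paper's: use the Proposition to rewrite $\e_{as,a^2t,u^{-1}}=\e_{s,t,(au)^{-1}}$ (and likewise for the inverse), then invoke Theorems~\ref{theo_e_irra} and~\ref{theo_e_inv_irra}. You are in fact more careful than the paper, which simply says ``apply the theorems'' without commenting on the sign of $au$; your observation that the $a<0$ case requires the absolute-value version of the tail estimate is a genuine refinement rather than a deviation.
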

\begin{proof}
From Proposition \ref{lemma_prop_nst}
\begin{align*}
    \e_{as,a^2t,u^{-1}}&=\sum_{n=0}^{\infty}\frac{u^{-\binom{n}{2}}}{\brk[c]{n}_{as,a^2t}!}=\sum_{n=0}^{\infty}\frac{u^{-\binom{n}{2}}}{a^{\binom{n}{2}}\brk[c]{n}_{s,t}!}=\sum_{n=0}^{\infty}\frac{(au)^{-\binom{n}{2}}}{\brk[c]{n}_{s,t}}=\e_{s,t,(au)^{-1}}.
\end{align*}
It is also shown that $\e_{as,a^2t,u^{-1}}^{-1}=\e_{s,t,(au)^{-1}}^{-1}$.
Finally, the Theorems \ref{theo_e_irra} and \ref{theo_e_inv_irra} must be applied. 
\end{proof}

\end{document}